\newcommand{\bburl}[1]{\textcolor{blue}{\url{#1}}}
\newcommand{\seqnum}[1]{\href{https://oeis.org/#1}{\rm \underline{#1}}}
\begin{document}
\mainmatter              
\title{On a Pair of Diophantine Equations}
\titlerunning{On a Pair of Diophantine Equations}  
%
\author{Sujith Uthsara Kalansuriya Arachchi\inst{1} \and H\`ung Vi\d{\^e}t Chu\inst{2}
Jiasen Liu\inst{3} \and Qitong Luan\inst{4} \and Rukshan Marasinghe\inst{5} \and Steven J. Miller\inst{6}}
\authorrunning{Arachchi et al.} 
%
\tocauthor{Sujith Uthsara Kalansuriya Arachchi, H\`ung Vi\d{\^e}t Chu, Jiasen Liu, 
Qitong Luan, Rukshan Marasinghe, Steven J. Miller}
\institute{Department of Mathematics, Faculty of Science, University of Colombo, Cumarathunga Munidasa Mawatha, Colombo 00300, Sri Lanka,\\
\email{sujithuthsarak@gmail.com}
\and
Department of Mathematics, Washington and Lee University, Lexington, VA 24450, USA,\\
\email{hchu@wlu.edu}
\and
Department of Mathematics, University of Southern California, Los Angeles, CA 90007, USA,\\
\email{jiasen.jason.liu@gmail.com}
\and
Department of Mathematics, University of California, Los Angeles, CA 90095, USA,\\
\email{qluan21@g.ucla.edu}
\and
Department of Mathematics, Faculty of Science, University of Colombo, Cumarathunga Munidasa Mawatha, Colombo 00300, Sri Lanka,\\
\email{rukshanmarasinghe@gmail.com}
\and
Department of Mathematics and Statistics, Williams College, Williamstown, MA 01267, USA,\\
\email{sjm1@williams.edu}; \email{Steven.Miller.MC.96@aya.yale.edu}
}

\maketitle              

\begin{abstract}
For relatively prime natural numbers $a$ and $b$, we study the two equations $ax+by = (a-1)(b-1)/2$ and $ax+by+1= (a-1)(b-1)/2$, which arise from the study of cyclotomic polynomials. Previous work showed that exactly one equation has a nonnegative solution, and the solution is unique. Our first result gives criteria to determine which equation is used for a given pair $(a,b)$. We then use the criteria to study the sequence of equations used by the pair $(a_n/\gcd{(a_n, a_{n+1})}, a_{n+1}/\gcd{(a_n, a_{n+1})})$ from several special sequences $(a_n)_{n\geq 1}$. Finally, fixing $k \in \mathbb{N}$, we investigate the periodicity of the sequence of equations used by the pair $(k/\gcd{(k, n)}, n/\gcd{(k, n)})$ as $n$ increases.  
\keywords{Diophantine equation, sequence, periodic}
\end{abstract}

\section{Introduction}

Given $a, b\in \mathbb{N}$, the following two Diophatine equations arise in the study of cyclotomic polynomials $\Phi_{pq}(x)$ \footnote{The equations are used in calculating the midterm coefficient of $\Phi_{pq}(x)$ (see \cite[pp.\ 770]{B}).} for primes $p < q$ \cite{B}: 
\begin{align}
\label{e1} ax + by &\ =\  \frac{(a-1)(b-1)}{2},\\
\label{e2} ax + by + 1 &\ =\ \frac{(a-1)(b-1)}{2}.
\end{align}
Beiter \cite{B} showed that if $a, b$ are primes, then exactly one of the equations has a nonnegative integral solution $(x,y)$. Chu \cite[Theorem 1.1]{C} extended the result to any pair of relatively prime numbers $(a, b)$; furthermore, the solution is unique. 
For a fixed pair of relatively prime numbers $(a,b)\in \mathbb{N}^2$, we say that $(a,b)$ uses Equation \eqref{e1} if \eqref{e1} has a nonnegative solution; otherwise, we say that $(a,b)$ uses Equation \eqref{e2}. As a corollary of \cite[Theorem 1.1]{C}, Chu considered which equation is used by the pairs $(F_n, F_{n+1})$ and $(F_n, F_{n+2})$, where $(F_n)_{\ge 1}$ is the Fibonacci sequence\footnote{Note that $\gcd(F_n, F_{n+1}) = \gcd(F_n, F_{n+2}) = 1$.}, and established new identities involving Fibonacci numbers. In particular, \cite[Theorems 1.4 and 1.6]{C} stated that the pair $(F_n, F_{n+1})$ uses \eqref{e1} and \eqref{e2} alternatively in groups of three; the same conclusion holds for $(F_n, F_{n+2})$. Following the work, Chen et al.\ studied which equation is used by $(F_n^2, F_{n+1}^2)$ and $(F_n^3, F_{n+1}^3)$ and discovered the following identities: for $n\ge 2$,
\begin{align*}
    1 + \frac{F_n^2-3}{2}F_n^2 + \frac{F_n^2 - F^2_{n-1}-1}{2}F_{n+1}^2 &\ =\ \frac{(F_n^2-1)(F_{n+1}^2-1)}{2}\,, \mbox{ if }n\mbox{ is odd},\\
    1 + \frac{F_n^2+1}{2}F_n^2 + \frac{F_n^2 - F^2_{n-1}-1}{2}F^2_{n+1}&\ =\ \frac{(F_n^2-1)(F^2_{n+1}-1)}{2}\,, \mbox{ if }n\mbox{ is even},\\
    \left(F_n^2 - \frac{F^2_{n-1}+1}{2}\right)F_n^2 + \frac{F^2_{n-1}-1}{2}F^2_{n+1}&\ =\ \frac{(F_n^2-1)(F^2_{n+1}-1)}{2}\,,
\end{align*}
and
\begin{align*}
    \left(\sum_{k=1}^{2n-1}(-1)^{k-1} F_k^3\right)F^3_{2n-1} + \left(\sum_{k=2}^{2n-2}F_k^3\right)F^3_{2n}&\ =\ \frac{(F^3_{2n-1}-1)(F^3_{2n}-1)}{2}\,,\\
    1 + \left(\sum_{k=1}^{2n}(-1)^k F_k^3-1\right)F^3_{2n} + \left(\sum_{k=2}^{2n-1}F_k^3\right)F^3_{2n+1}&\ =\ \frac{(F^3_{2n}-1)(F^3_{2n+1}-1)}{2}\,.
\end{align*}

Continuing these work, the present paper studies the sequence of equations used by consecutive terms of some special sequences $(a_n)_{n\ge 1}$. Moreover, we avoid requiring consecutive terms of $(a_n)_{n\ge 1}$ to be relatively prime (as in the case of $(F_n)_{n\ge 1}$) by considering $(a_n/d, a_{n+1}/d)$ for $d = \gcd(a_n, a_{n+1})$ instead. This was recently done by Davala \cite{D} for $(B_n, B_{n+2})$, where $(B_n)_{\ge 1}$ is the balancing sequence\footnote{Balancing numbers were introduced by Behera and Panda \cite{BP} to be solutions of the Diophantine equation $1+2+\cdots+(n-1) = (n+1)+(n+2)+\cdots+(n+r)$ for some natural number $r$.}. Here $\gcd(B_n, B_{n+2}) = 6$. The main goal of the present paper is to treat the above-mentioned problem systematically. We first show a method to tell which equation is used by two arbitrary positive integers $a$ and $b$, then apply the method to prove the pattern of equations used by various sequences. 

For convenience, we introduce the following notation. Let $\Gamma(a,b)$ be the equation that the pair $(a/d, b/d)$ uses, where $d = \gcd(a, b)$. In particular, $\Gamma(a,b) = 1$ if $(a/d, b/d)$ uses $\eqref{e1}$; otherwise, $\Gamma(a,b) = 2$. For $b/d > 1$, define $\Theta(a, b)$ to be the unique multiplicative inverse of $a/d$ modulo $b/d$ such that $0 < \Theta(a,b) < b/d$. We are ready to state the first result that relates $\Gamma$ to $\Theta$. Note that unlike $\Gamma$, $\Theta(a,b)$ is not necessarily equal to $\Theta(b,a)$.

\begin{theorem}
\label{m1} Let $a,b\in \mathbb{N}$. If $a$ divides $b$ or $b$ divides $a$, then $\Gamma(a, b) = 1$. Otherwise, 
\begin{enumerate}
\item\label{i1} if $a/\gcd(a,b)$ is odd, then $\Gamma(a, b) = 1$ if and only if $\Theta(b, a)$ is odd; 
\item\label{i2} if $a/\gcd(a,b)$ is even, then $\Gamma(a,b) = 1$ if and only if  $\Theta(a,b)$ is odd.
\end{enumerate}
\end{theorem}

\begin{remark}\normalfont
Given $a,b\in \mathbb{N}$ with $b/\gcd(a,b) > 1$, we can find $\Theta(a,b)$ using Euler's Theorem. In particular, Euler's Theorem implies that
$$(a/d)^{\phi(b/d)-1}\times (a/d)\ \equiv\ 1\mod b/d\,,$$
where $d = \gcd(a,b)$ and $\phi$ is the Euler totient function.
Write $(a/d)^{\phi(b/d)-1} = (b/d) \ell + r$ for some $\ell\ge 0$ and $0 < r < b/d$. Then $\Theta(a,b) = r$. 

For example, let $a = 15$ and $b = 85$. We have $d = \gcd(15,85) = 5$, $\phi(b/d) = 16$, and $(a/d)^{\phi(b/d)-1} = 3^{15}$. Since $3^{15} = 17\times 844053 + 6$, we obtain $\Theta(a,b) = 6$. 
\end{remark}

Given a sequence $(a_n)_n\subset\mathbb{N}$, let $\Delta((a_n)_n) = (\Gamma(a_n, a_{n+1}))_n$, i.e., $\Delta$ gives the sequence of equations used by consecutive terms of $(a_n)_n$. Inspired by \cite[Theorem 1.4]{C} that for $\Delta((F_n)_n)$, each equation appears in groups of three alternatively, we construct a sequence whose $\Delta$ has each equation appear in groups of $k$ alternatively ($k\in \mathbb{N}$).

\begin{theorem}\label{m2}
Fix $k\in \mathbb{N}$ and let $a_n = (\lceil 2^{n+k-1}/(2^k+1)\rceil)_{n\ge 1}$. Then $\Delta((a_n)_{n})$ is
$$\underbrace{1,\ldots, 1}_{k}, \underbrace{2, \ldots, 2}_k, \underbrace{1, \ldots, 1}_k, \underbrace{2, \ldots, 2}_k, \ldots\,.$$
\end{theorem}

\begin{remark}\normalfont
The case $k=1$ gives us the sequence $(\lceil 2^n/3\rceil)_{n\ge 1}$, which is \seqnum{A005578} in the Online Encyclopedia of Integer Sequences (OEIS) \cite{Sl}. The case $k = 2$ gives $(\lceil 2^{n+1}/5\rceil)_{n\ge 1}$, which appears to be unavailable in the encyclopedia at the time of the current writing.  
\end{remark}

Next, for a fixed $k\in \mathbb{N}$, we investigate $\Delta((n^k)_{n \ge 1})$. When $k=1$, $\Delta(\mathbb{N})$ has the simple form $1,2,1,2,1,2,\ldots$.
Interestingly, for $k > 1$, $\Delta((n^k)_{\ge 1})$ appears not to have a nice pattern among the first few terms. Let us take a look at $\Delta((n^4)_{n \ge 1})$, for example
$$1, 2, 1, 1, 1, 2, 2, 1, 2, 2, 1, 2, 1, 2, 1, 2, 1, 2, 1, 1, 2, 1, 2, 1, 2, 1, 2, 1, 2, 1, 2, 1, 2, 1, \ldots\,;$$
however, we have the following result about the pattern of $\Delta((n^k)_{n\ge 1})$ in the long run.

\begin{theorem}\label{m3}
Fix $k\in \mathbb{N}$. Then $\Delta((n^k)_{n})$ is eventually $1,2,1,2,1,2, \ldots$. In particular, define 
$$g(x)\ =\ \left(\sum_{i=1}^k x^{i-1}\right)^k\mod x^k\,.$$
When $k$ is odd, let $M_k$ be the smallest positive integer such that 
$$0 \ < \ n^k - g(n) \ <\ n^k\mbox{ and }0\ <\ g(-n)\ <\ n^k,\mbox{ for all }n\ge M_k\,;$$
when $k$ is even, let $M_k$ be the smallest positive integer such that
$$0\ <\ n^k+g(-n) \ <\ n^k\mbox{ and }0 \ <\ g(n) \ <\ n^k, \mbox{ for all }n\ge M_k\,.$$
Then the sequence $(\Gamma(n^k, (n+1)^k))_{n\ge 1}$ starts to be $1,2,1,2,1,2,\ldots$ at $n \le M_k+1$. 
\end{theorem}

The next natural sequences to consider are arithmetic progressions of the form $(a + (n-1)r)_{n\ge 1}$ for fixed $a, r\in \mathbb{N}$. We do not consider geometric progressions with an integral multiplier because every given term in the sequence divides the next term; hence, the sequence $\Delta$ is constantly $1$ according to Theorem \ref{m1}.  We instead investigate $\Delta((a_n)_{n\ge 1})$, where $(a_n)_n$ is a shifted geometric sequence, i.e., $a_n  = ar^{n-1}+1$ for some $a, r\in \mathbb{N}$. 

\begin{theorem}\label{m5}
Let $(a_n)_{n\ge 1}$ be an arithmetic progression. Then $\Delta((a_n)_{n})$ is either $1, 2, 1, 2, \ldots$ or $2, 1, 2, 1, \ldots$.
\end{theorem}

\begin{theorem}\label{m6}
Let $a, r\in \mathbb{N}$ with $r \ge 2$. For each $n\ge 1$, define the sequence $(a_n)_n = (ar^{n-1}+1)_n$. Set $d = \gcd(a + 1, r - 1)$. 
\begin{enumerate}
\item Suppose that $d$ is odd. If $a+1$ does not divide $r-1$, then $\Delta((a_n)_{n\ge 1})$ is constant. If $a+1$ divides $r-1$, then $\Delta((a_n)_{n\ge 1})$ has the first term be $1$ and the later terms be $2$. 
\item If $d$ is even, then $\Delta((a_n)_{n\ge 2})$ alternates between $1$ and $2$.
\end{enumerate}
\end{theorem}

What we have done so far is to determine $\Delta((a_n)_n)$, for a fixed sequence $(a_n)_n$. Note that $\Delta((a_n)_n)$ is the sequence $(\Gamma(a_n, a_{n+1}))_{n\ge 1}$, so both inputs of the function $\Gamma$ change as we move along the sequence.  Our final result instead considers the case when one of the parameters is fixed. Specifically, we study properties of the sequence $(\Gamma(k,n))_{n\ge 1}$ when $k\in \mathbb{N}$ is given. Given a sequence $(a_n)_n$, let $T$ be the smallest positive integer (if any) such that
$a_n = a_{n+T}$ for all $n\ge 1$; the number $T$ is called the period of $(a_n)_n$.

\begin{theorem}\label{m4}
Fix $k\in \mathbb{N}$. For $n\ge 1$, let $a_n = \Gamma(k, n)$. The following hold.
\begin{enumerate}
\item If $k$ is odd, $(a_n)_n$ has period $k$; furthermore, in each period, the number of $1$'s is one more than the number of $2$'s.
\item If $k$ is even, $(a_n)_n$ has period $2k$; furthermore, in each period, the number of $1$'s is two more than the number of $2$'s.
\end{enumerate}
\end{theorem}

Our paper is structured as follows. In Sect. \ref{s2}, we prove Theorem \ref{m1}, which gives a way to compute $\Gamma{(a, b)}$ using $\Theta{(a, b)}$, and provide some preliminary results for our later proofs. In Sect. \ref{s3}, we use
the framework developed in Sect. \ref{s2} to investigate $\Delta{((a_n)_n)}$ for some special sequences $(a_n)_n$. In Sect. \ref{s4}, we prove Theorem \ref{m4}, which describes the period of the sequence $(\Gamma{(k, n)_n})_{n\geq1}$ for any fixed $k\in \mathbb{N}$. In Sect. \ref{s5}, we study sequences satisfying certain linear recurrence relations of order two, thus extending \cite[Theorem 1.4]{C}.

\section{On $\Gamma(a,b)$ and Preliminary Results}\label{s2}

The main goal of this section is to prove Theorem \ref{m1}, which tells us which equation is used by two arbitrary numbers $a, b$, given the parity of $a/\gcd(a,b)$ and of $\Theta$. The key idea is to use the modulo argument to express $\Theta$ in terms of the solution of \eqref{e1} or \eqref{e2}. We then present other useful results to be used in later sections. 

\begin{proof}[Proof of Theorem \ref{m1}] 
We can rewrite \eqref{e1} and \eqref{e2} as 
\begin{equation}\label{e3}a(2x+1)+b(2y+1)\ =\ ab+\ell\,,\end{equation}
where $\ell = 1$ and $-1$, respectively.

If $a$ divides $b$, then $\Gamma(a,b) = \Gamma(1, b/a)$. Note that $(0,0)$ is the solution of
$$1\times (2x+1)+\frac{b}{a}\times (2y+1)\ =\ 1\times \frac{b}{a} +1\,.$$
Hence, $\Gamma(a,b) = 1$. Similarly, if $b$ divides $a$, $\Gamma(a,b) = 1$. 

Next, we prove Item \eqref{i1}. Let $d = \gcd(a,b)$. Suppose that $d\notin \{a, b\}$ and $a/d$ is odd. Assuming $\Gamma(a,b) = 1$, we need to show $\Theta(b,a)$ is odd. By \eqref{e3}, there exist nonnegative integers $x$ and $y$ such that 
\begin{equation}\label{e80}\frac{a}{d}(2x+1) + \frac{b}{d}(2y+1) \ =\ \frac{ab}{d^2}+1\,.\end{equation}
Taking modulo $a/d$ gives
$$\frac{b}{d}(2y+1)\ \equiv\ 1\mod \frac{a}{d}\,.$$
Hence,
$$\Theta(b,a)\frac{b}{d}(2y+1)\ \equiv\ \Theta(b,a)\mod \frac{a}{d}\,.$$
By definition, $\Theta(b,a)b/d \equiv 1\mod a/d$; therefore,
\begin{equation}\label{e4}\Theta(b,a)\ \equiv\ 2y+1\mod \frac{a}{d}\,.\end{equation}
We claim that $\Theta(b,a) = 2y+1$. To do so, it suffices to verify that $0 < 2y+1 < a/d$. Multiplying both sides of \eqref{e80} by $d/b$, we have
\begin{equation}\label{e6}\frac{a}{b}(2x+1) + 2y+1 \ =\ \frac{a}{d}+\frac{d}{b}\,.\end{equation}
Since $d < a$, \eqref{e6} implies that
$$\frac{d}{b}(2x+1)+2y+1 \ <\ \frac{a}{d}+\frac{d}{b}\,,$$
which gives
$$0 \ <\ 2y+1 \ <\ \frac{a}{d}\,,$$
as desired. 

Conversely, assuming that $\Gamma(a,b) = 2$, we prove that $\Theta(b,a)$ is even. By \eqref{e3}, there exist nonnegative integers $x$ and $y$ such that 
\begin{equation}\label{e7}\frac{a}{d}(2x+1) + \frac{b}{d}(2y+1) \ =\ \frac{ab}{d^2}-1\,.\end{equation}
Taking modulo $a/d$ then multiplying both sides by $\Theta(b,a)$ gives
$$\Theta(b,a)\frac{b}{d}(2y+1)\ \equiv\ -\Theta(b,a)\mod \frac{a}{d}\,.$$
Hence,
\begin{equation}\label{e8}2y+1\ \equiv\ \frac{a}{d}-\Theta(b,a)\mod \frac{a}{d}\,.\end{equation}
It is easily seen from \eqref{e7} that $0< 2y+1 < a/d$. It follows from the definition of $\Theta$ that $0 < a/d - \Theta(b,a) < a/d$. Therefore, \eqref{e8} implies that $2y + 1 = a/d - \Theta(b,a)$.  Since $a/d$ is odd, $\Theta(b,a)$ must be even. This completes our proof of Item \eqref{i1}.

It remains to prove Item \eqref{i2}, but \eqref{i2} follows directly from Item \eqref{i1}. Indeed, suppose that $a/d$ is even. Since $\gcd(a/d, b/d) = 1$, we know that $b/d$ is odd. By Item \eqref{i1}, $\Gamma(a,b) = 1$ if and only if $\Theta(a,b)$ is odd. 
\end{proof}

The next two results are used in the proof of Theorem \ref{m2}. 

\begin{corollary}\label{c1}
For $a\ge 1$, $\Gamma(a, 2a) = 1$; for $a\ge 2$, $\Gamma(a, 2a-1) = 2$.
\end{corollary}
\begin{proof}
By Theorem \ref{m1}, $\Gamma(a,2a) = 1$. To see that $\Gamma(a, 2a-1) = 2$, we first compute $\Theta(2a-1, a) = a-1$ and $\Theta(a, 2a-1) = 2$. If $a$ is odd, then $\Theta(2a-1,a)$ is even; Theorem \ref{m1} states that $\Gamma(a,2a-1) = 2$. If $a$ is even, then $\Gamma(a, 2a-1) = 2$ because $\Theta(a, 2a-1)$ is even.
\end{proof}

\begin{lemma}\label{l1}
Let $k, n\in \mathbb{N}$ and $m\in \{1, \ldots, 2k\}$ such that $n\equiv m\mod 2k$. Then
$$2^{n+k-1}\ \equiv\ \begin{cases}-2^{m-1}\mod 2^k+1\,, &{\rm if } \mbox{ } 1\le m\le k\,,\\ 2^{m-k-1}\mod 2^k+1\,, &{\rm if } \mbox{ }k+1\le m\le 2k\,.\end{cases}$$
\end{lemma}

\begin{proof}
Write $n = 2kj+m$ for some $j\ge 0$. 

If $1\le m\le k$, then
\begin{equation*}
2^{n+k-1}-2^{m+k-1}\ =\ 2^{2kj+m+k-1}-2^{m+k-1}\ =\ 2^{m+k-1}(2^{2kj}-1)\,,
\end{equation*}
which is divisible by $2^k+1$. Hence, $2^{n+k-1}\equiv 2^{m+k-1}\equiv -2^{m-1}\mod 2^k+1$.

If $k+1\le m\le 2k$, then 
\begin{equation*}
2^{n+k-1}-2^{m-k-1}\ =\ 2^{2kj+2k+m-k-1}-2^{m-k-1}\ =\ 2^{m-k-1}(2^{2k(j+1)}-1)\,,
\end{equation*}
which is divisible by $2^k+1$. Hence, $2^{n+k-1}\equiv 2^{m-k-1}\mod 2^k+1$.
\end{proof}

Our next two lemmas show that for arithmetic progressions and shifted geometric progressions $(a_n)_n$, $\gcd(a_n, a_{n+1})$ is a constant. 

\begin{lemma}\label{lem_ap}
Fix $a, r\in \mathbb{N}$. Let $a_n = a+(n-1)r$. Then $\gcd(a_n,a_{n+1}) = \gcd(a, r)$.
\end{lemma}

\begin{proof}
 Let $n\in \mathbb{N}$. We have that $\gcd(a_n, a_{n+1})$ divides $2a_{n+1}-a_n = a_{n+2}$. Hence, $\gcd(a_n, a_{n+1})$ divides $\gcd(a_{n+1}, a_{n+2})$. Conversely, $\gcd(a_{n+1}, a_{n+2})$ divides $2a_{n+1}-a_{n+2} = a_n$. Hence, $\gcd(a_{n+1}, a_{n+2})$ divides $\gcd(a_n, a_{n+1})$. Consequently, $$\gcd(a_n, a_{n+1}) \ =\ \gcd(a_{n+1}, a_{n+2})\,.$$ Therefore, for all $n$, $\gcd(a_n, a_{n+1}) = \gcd(a_1, a_2) = \gcd(a, a+r) = \gcd(a,r)$. 
\end{proof}
\begin{lemma}\label{l4}
Fix $a,r\in \mathbb{N}$ with $r\ge 2$. Let $a_n = ar^{n-1}+1$. Then $\gcd(a_n,a_{n+1})=\gcd(a+1, r-1)$ for all $n\in\mathbb{N}$.
\end{lemma}
\begin{proof}
Let $k\ge 0$. We shall show that 
\begin{equation}\label{e60}\gcd(ar^{k}+1, r-1)\ =\ \gcd(a+1, r-1)\,.\end{equation}
The equality clearly holds when $k = 0$. Suppose that it holds for some $k = \ell\ge 0$. We have
\begin{align*}\gcd(ar^{\ell+1}+1, r-1)&\ =\ \gcd(r(ar^{\ell}+1), r-1)\\
&\ =\ \gcd(ar^{\ell}+1, r-1)\ =\ \gcd(a+1, r-1)\,.\end{align*}
By mathematical induction, we are done. 

For $n\in \mathbb{N}$, by \eqref{e60}, we have
\begin{align*}
  \gcd(a_n,a_{n+1}) & \ = \ \gcd(ar^{n-1}+1,ar^n+1)\ = \ \gcd(ar^{n-1}+1,ar^{n-1}(r-1))\\
  &\ =\ \gcd(ar^{n-1}+1, r-1)\ =\ \gcd(a+1, r-1)\,.
\end{align*}
\end{proof}

\section{The Sequence $\Delta((a_n)_n)$ for Some Special $(a_n)_n$}\label{s3}
In this section, we look at various sequences $(a_n)_n$ and determine their $\Delta((a_n)_n)$. Results from Sect. \ref{s2} will be used in due course. 

\subsection{Sequences Whose $\Delta$ Has a Periodic Form}
\begin{proof}[Proof of Theorem \ref{m2}]
Fix $k\in \mathbb{N}$. Let $a_n = \lceil 2^{n+k-1}/2^{k+1}\rceil$. Thanks to Corollary \ref{c1}, it suffices to prove that $a_1 = 1$ and for $n\ge 1$,
$$a_{n+1}\ =\ \begin{cases}2a_n\,,&\mbox{ if }n\equiv 1\,, \ldots, k\mod 2k\,,\\ 2a_n-1\,,&\mbox{ if }n\equiv k+1, \ldots, 2k\mod 2k\,.\end{cases}$$
Fix $n\in \mathbb{N}$ and pick $m\in \{1,\ldots, 2k\}$ such that $n\equiv m\mod 2k$. 

\mbox{ }

Case 1: $1\le m\le k$. By Lemma \ref{l1}, $2^{n+k-1}\equiv 2^{k}+1-2^{m-1}\mod 2^k+1$, so
\begin{align*}
2^{n+k-1}&\ =\ (2^k+1)\left\lfloor \frac{2^{n+k-1}}{2^k+1}\right\rfloor + 2^k+1-2^{m-1}\\
&\ =\ (2^k+1)\left(\left\lceil \frac{2^{n+k-1}}{2^k+1}\right\rceil-1\right)+ 2^k+1-2^{m-1}\,.
\end{align*}
Multiplying both sides by $2$ gives
$$2^{n+k}\ =\ 2(2^k+1)\left\lceil \frac{2^{n+k-1}}{2^k+1}\right\rceil-2^{m}\,.$$
Hence,
$$\frac{2^{n+k}}{2^k+1}\ =\ 2\left\lceil \frac{2^{n+k-1}}{2^k+1}\right\rceil - \frac{2^m}{2^k+1}\,.$$
Therefore, 
$$\left\lceil \frac{2^{n+k}}{2^k+1}\right\rceil\ =\ 2\left\lceil \frac{2^{n+k-1}}{2^k+1}\right\rceil\,,$$
i.e., $a_{n+1} = 2a_{n}$. 

\mbox{ }

Case 2: $k+1\le m\le 2k$. By Lemma \ref{l1}, $2^{n+k-1}\equiv 2^{m-k-1}\mod 2^k+1$, so
\begin{align*}
2^{n+k-1}&\ =\ (2^k+1)\left\lfloor \frac{2^{n+k-1}}{2^k+1}\right\rfloor + 2^{m-k-1}\\
&\ =\ (2^k+1)\left(\left\lceil \frac{2^{n+k-1}}{2^k+1}\right\rceil-1\right)+ 2^{m-k-1}.
\end{align*}
Multiplying both sides by $2$ gives
$$2^{n+k}\ =\ (2^k+1)\left(2\left\lceil \frac{2^{n+k-1}}{2^k+1}\right\rceil-1\right) - (2^k+1- 2^{m-k}).$$
Hence,
$$\left\lceil\frac{2^{n+k}}{2^k+1}\right\rceil \ =\ 2\left\lceil \frac{2^{n+k-1}}{2^k+1}\right\rceil - 1\,,$$
i.e., $a_{n+1} = 2a_n - 1$. This completes our proof. 
\end{proof}

\subsection{The $k$\textsuperscript{th}-Power Sequence}
\begin{proof}[Proof of Theorem \ref{m3}]
Let us first assume that $k$ is odd. Choose $n\in \mathbb{N}_{\ge 2}$.
Let $x = \Theta((n-1)^k, n^k)$ and $y = \Theta((n+1)^k, n^k)$.
Since $(n-1)^k x\equiv 1\mod n^k$, we have, in modulo $n^k$,
\begin{equation}\label{e16}
(-1)^k x\ \equiv\ (n^k - 1)^k x\ =\  \left(\sum_{i=1}^k n^{i-1}\right)^k (n-1)^k x \ \equiv\ \left(\sum_{i=1}^k n^{i-1}\right)^k\,.
\end{equation}
As $k$ is odd, it follows that
\begin{equation}\label{e10}
    x\ \equiv\ n^k-\left(\sum_{i=1}^k n^{i-1}\right)^k\mod n^k\,.
\end{equation}
Since $(n+1)^k y\equiv 1\mod n^k$, we have, in modulo $n^k$,
\begin{equation}\label{e11}
y\ \equiv\ (n^k+1)^k y\ =\ \left(\sum_{i=1}^k (-1)^{i-1}n^{i-1}\right)^k (n+1)^k y\ \equiv\ \left(\sum_{i=1}^k (-1)^{i-1}n^{i-1}\right)^k\,.
\end{equation}

Define $u(x)= \left(\sum_{i=1}^k x^{i-1}\right)^k$ and $v(x) = \left(\sum_{i=1}^k (-1)^{i-1}x^{i-1}\right)^k$. Since $u(x) + v(x) = u(-x) + v(-x)$, $u(x)+v(x)$ is an even function; therefore, the coefficients of odd powers in $u(x)$ are equal to the negative of the corresponding coefficients of $v(x)$. On the other hand, $u(x)-v(x)$ is an odd function, so the coefficients of even powers in $u(x)$ are equal to the corresponding even powers in $v(x)$. Therefore, if we let $g(x)$ be the tail of $u(x)$ up to the power $k-1$, then $g(-x)$ is the tail of $v(x)$ up to the power $k-1$. From \eqref{e10} and \eqref{e11}, we have
\begin{equation}\label{e12}x\ \equiv\ n^k-g(n)\mbox{ and }y\ \equiv\ g(-n)\mod n^k\,.\end{equation}
Since $\deg(g(n)) = \deg(g(-n)) = k-1$, we can choose $N_1\in \mathbb{N}$ such that 
$n^{k}  > g(n)$,  for all $n \ge N_1$. 
Observe that the coefficient of $x^{k-1}$ is positive in both $g(x)$ and $g(-x)$; hence, we can choose $N_2\in \mathbb{N}$ such that 
$g(n), g(-n) > 0$ whenever $n \ge N_2$. Set $N = \max\{N_1, N_2\}$ to have
\begin{equation}\label{e13}0 \ < \ n^k - g(n) \ <\ n^k\mbox{ and }0\ <\ g(-n)\ <\ n^k\,.\end{equation}
From \eqref{e12} and \eqref{e13}, we obtain
\begin{equation}\label{e15}x \ =\ n^k-g(n)\mbox{ and }y \ =\ g(-n)\,,\quad \mbox{ for all }n\ge N\,.\end{equation}
Therefore, 
\begin{equation}\label{e14}x + y \ =\  n^k-(g(n)-g(-n))\,, \quad \mbox{ for all }n\ge N\,.\end{equation}
As discussed above, all coefficients of $g(x)-g(-x)$ are even. Hence, the parity of $x+y$ is the same as the parity of $n^k$.

Taking an even $M \ge N-1$, we show that
$$\Gamma(M^k, (M+1)^{k}) \ \neq\ \Gamma((M+1)^{k}, (M+2)^{k})\,.$$
To do so, we invoke Theorem \ref{m1}. By Theorem \ref{m1}, $\Gamma(M^k, (M+1)^k) = 1$ if and only if $\Theta(M^k, (M+1)^k)$ is odd; furthermore, $\Gamma((M+1)^k, (M+2)^k) = 1$ if and only if $\Theta((M+2)^k, (M+1)^k)$ is odd. By \eqref{e14}, $\Theta(M^k, (M+1)^k)+\Theta((M+2)^k, (M+1)^k)$ has the same parity as $(M+1)^k$, which is odd. Hence, $\Theta(M^k, (M+1)^k)$ and $\Theta((M+2)^k, (M+1)^k)$ have different parities and so, $\Gamma(M^k, (M+1)^{k})  \neq \Gamma((M+1)^{k}, (M+2)^{k})$.

To show that $\Gamma$ alternates between $1$ and $2$, it remains to verify that 
$\Theta(M^k, (M+1)^k)$ and  $\Theta((M+2)^k, (M+3)^k)$ have the same parity. According to \eqref{e15},
\begin{align*}
\Theta(M^k, (M+1)^k) &\ =\ (M+1)^k-g(M+1)\,,\\
\Theta((M+2)^k, (M+3)^k)&\ =\ (M+3)^k-g(M+3)\,.
\end{align*}
Hence, $\Theta((M+2)^k, (M+3)^k)-\Theta(M^k, (M+1)^k) = (M+3)^k-(M+1)^k-(g(M+3)-g(M+1))$, which is clearly even. It follows that  $\Theta((M+2)^k, (M+3)^k)$ and $\Theta(M^k, (M+1)^k)$ have the same parity. Invoking Theorem \ref{m1}, we finish the proof when $k$ is odd. 
\end{proof}

\begin{proof}[Proof of Theorem \ref{m3} for even $k$]
Let $n, x, y$, and $g(x)$ be chosen as in the proof of Theorem \ref{m3} for odd $k$. From \eqref{e16} and that $k$ is even, we know that
$$
x\ \equiv\ g(n)\mod n^k\,.
$$
On the other hand, in modulo $n^k$, 
$$y \ =\ (-1)^k y\ \equiv\ (n^k-1)^k y\ =\ \left(\sum_{i=1}^k(-1)^in^{i-1}\right)^k(n+1)^k y\ \equiv\ \left(\sum_{i=1}^k(-1)^in^{i-1}\right)^k\,.$$
Since $k$ is even,
$$y\ \equiv\ \left(\sum_{i=1}^k(-1)^{i-1}n^{i-1}\right)^k\mod n^k\,; \mbox{ hence, }y\ \equiv \ n^k + g(-n)\mod n^k\,.$$
Choose $M\in \mathbb{N}$ such that $0 < n^k+g(-n) < n^k$ and $0 < g(n) < n^k$ whenever $n\ge M$. This can be done since $\deg(g(n)) = \deg(g(-n)) = k-1$, and the coefficients of $n^{k-1}$ in $g(n)$ and $g(-n)$ are positive and negative, respectively. Therefore, for $n\ge M$, $x = g(n)$ and $y = n^k + g(-n)$. That $x+y = n^k + (g(n)+g(-n))$ shows that $x+y$ has the same parity as $n^k$. Using the same argument as in the proof of Theorem \ref{m3} for odd $k$ completes the proof. 
\end{proof}

\subsection{Arithmetic Progression}

\begin{proof}[Proof of Theorem \ref{m5}]
For $a_n\ =\ a\ +\ r(n-1)$ , using Lemma \ref{lem_ap}, we let 
    $\gcd(a_n,a_{n+1}) = d$ for all $n\in\mathbb{N}$. Fix $n\ge 2$ and consider the consecutive terms $a_n, a_n+r, a_n+2r$ in the sequence. Note that $a_n\nmid a_{n+1}$ because $n\ge 2$. Our goal is to show that $\Gamma(a_n, a_{n+2}) \neq \Gamma(a_{n+1}, a_{n+2})$.

\mbox{ }
    
    Case 1: $a_{n+1}/d$ is odd.
    Let $x = \theta(a_n,a_{n+1})$ and $y = \theta(a_{n+2},a_{n+1})$. Then
    \begin{align*}\frac{a_n}{d}x &\ \equiv \ 1 \mod \frac{a_n}{d}+\frac{r}{d}\,,\\
     \left(\frac{a_n}{d}+\frac{2r}{d}\right)y &\ \equiv\ 1 \mod \frac{a_n}{d}+\frac{r}{d}\,.\end{align*}
These imply that 
    \begin{align}\label{e81'}\frac{r}{d}\left(\frac{a_n}{d}+\frac{r}{d}-x\right) &\ \equiv\ 1 \mod \frac{a_n}{d}+\frac{r}{d}\,,\\
    \label{e82'}\frac{r}{d}y &\ \equiv\ 1 \mod \frac{a_n}{d}+\frac{r}{d}\,.\end{align}
    Since $0 < x < a_n/d+r/d$, we  get
    \begin{equation}\label{e85'} 0 \ <\ \frac{a_n}{d}+\frac{r}{d}-x \ <\ \frac{a_n}{d}+\frac{r}{d}\,. \end{equation}
    From \eqref{e81'}, \eqref{e82'}, and \eqref{e85'}, we deduce that
    $$y\ =\ \frac{a_n}{d}+\frac{r}{d} - x\,.$$
    Hence, $x+y = (a_n+r)/d$. That $\gcd(a_n/d, r/d) = \gcd(a_n/d, (a_n+r)/d) = 1$ implies that $(a_n+r)/d$ is odd. Consequently, $x$ and $y$ have different parities. Using Theorem \ref{m1}, we conclude that $\Gamma(a_n, a_{n+1})\neq \Gamma(a_{n+1}, a_{n+2})$.

\mbox{ }
    
    Case 2: $a_{n+1}/d$ is even, then $a_n/d$ and $a_n/d+ 2r/d$ are both odd. Let $x = \theta(a_{n+1},a_n)$ and $y = \theta(a_{n+1},a_{n+2})$.
  Then 
  \begin{align*}\left(\frac{a_n}{d}+\frac{r}{d}\right)x &\ \equiv \ 1 \mod \frac{a_n}{d}\,,\\
 \left(\frac{a_n}{d}+\frac{r}{d}\right)y &\ \equiv\ 1 \mod \frac{a_n}{d}+\frac{2r}{d}\,.\end{align*}
Equivalently, there exist positive integers $k_1,k_2< (a_n+r)/d$ such that
\begin{align}
\label{e92}\left(\frac{a_n}{d}+\frac{r}{d}\right)x &\ = \ 1 + k_1\frac{a_n}{d}\,,\\
\label{e91}\left(\frac{a_n}{d}+\frac{r}{d}\right)y &\ = \ 1 + k_2\left(\frac{a_n}{d}+\frac{2r}{d} \right)\,.
\end{align}
Subtracting \eqref{e92} and \eqref{e91} side by side gives
    $$\frac{a_n+r}{d}\left(x-y+2k_2 \right) \ =\  \frac{a_n}{d}\left( k_1+k_2\right).$$
    Since $\gcd((a_n+r)/d,a_n/d) = 1$, $(a_n+r)/d$  divides $k_1+k_2$.
Observe that $0< k_1+k_2 < 2(a_n+r)/d$ because $0<k_1, k_2 < (a_n+r)/d$. Therefore, 
$$k_1+k_2 \ =\ \frac{a_n+r}{d} \mbox{ and }x-y+2k_2 = \frac{a_n}{d}\,.$$
Since $a_n/d$ is odd, $x$ and $y$ must have different parities. We again have $\Gamma(a_n,a_{n+1}) \ne \Gamma(a_{n+1},a_{n+2})$.

We have shown that $\Gamma(a_n,a_{n+1}) \ne \Gamma(a_{n+1},a_{n+2})$ for all $n\ge 2$. It remains to show that $\Gamma(a_1, a_2)\neq \Gamma(a_2, a_3)$. If $a_1\nmid a_2$, the same reasoning as above gives $\Gamma(a_1,a_2) \neq \Gamma(a_2,a_3)$.
If $a_1\mid a_2$, $\Gamma(a_1,a_2)\ =\ 1$. Let $a_2\ =\ pa_1\ =\ pa$ for some $p\ge 2$. Then $a_3\ =\ 2a_2-a_1\ =\ (2p-1)a$ and $$\Gamma(a_2,a_3)\ =\ \Gamma(pa, (2p-1)a)\ =\ \Gamma(p, 2p-1)\,.$$
By Corollary \ref{c1}, $\Gamma(a_2,a_3)\ =\ 2 \neq\Gamma(a_1,a_2)$.
\end{proof}

\subsection{Shifted Geometric Progression}

\begin{proof}[Proof of Theorem \ref{m6} when $d$ is odd]
 For $n\in \mathbb{N}$, let $b_n = ar^{n-1}$. Then we can write the three consecutive terms $a_n,a_{n+1},a_{n+2}$ as 
$$b_n+1 , rb_n+1 , r^2b_n+1\,.$$
 Since $d$ is odd, $rb_n+1$ is odd. Indeed, suppose otherwise that $rb_n+1$ is even. Then both $r$ and $b_n$ are odd. Hence, $r^2b_n+1$ is even, thus $\gcd(a_{n+1}, a_{n+2}) = \gcd(rb_n+1, r^2b_n+1)$ is even. This contradicts Lemma \ref{l4} that $\gcd(a_{n+1}, a_{n+2}) = d$, which is odd. 

Observe that $rb_n+1$ does not divide $r^2b_n+1$ because
$$(rb_n+1)(r-1)\ =\ r^2b_n+r-ar^n-1\ <\ r^2b_n+1\ <\ (rb_n+1)r\,.$$
Let $x = \Theta(b_n+1, rb_n+1)$ and $y = \Theta(r^2b_n+1, rb_n+1)$.
Then
\begin{align}\label{e70}\frac{(b_n+1)x}{d} &\ \equiv \ 1 \mod \frac{rb_n+1}{d}\,,\\
\label{e71}\frac{(r^2b_n+1)y}{d} &\ \equiv \ 1 \mod \frac{rb_n+1}{d}\,.\end{align}
Note that \eqref{e71} is equivalent to
\begin{equation}\label{e72}\frac{(1-r)y}{d} \ \equiv \ 1 \mod \frac{rb_n+1}{d}\,. \end{equation}
Multiplying \eqref{e72} by $b_n$ gives
\begin{equation*}\frac{(b_n+1)y}{d} \ \equiv\ b_n \mod \frac{rb_n+1}{d}\,,\end{equation*}
which implies that
\begin{equation}\label{e73}\frac{(b_n+1)((rb_n+1)/d+d-y)}{d} \ \equiv \ 1 \mod \frac{rb_n+1}{d}\,.\end{equation}

\mbox{ }

Case 1: $a+1$ does not divide $r-1$. Then $d < a+1$, so $d \le b_1$. We have $b_n\ge d$ for all $n\in \mathbb{N}$.  
By \eqref{e72},
$(rb_n+1)/d$ divides $(r-1)y/d + 1$; hence,
$$\frac{(r-1)y}{d}\ \ge\ \frac{rb_n+1}{d}-1\ >\ r-1\,,$$
where the second inequality is due to $b_n + 1/r > d$. As a result, $y > d$. Combining with the definition of $y$, we obtain 
\begin{equation}\label{e74} 0\ <\ \frac{rb_n+1}{d}+d-y \ <\ \frac{rb_n+1}{d}\,.\end{equation}
It follows from \eqref{e70}, \eqref{e73}, and \eqref{e74} that
$$x+y\ =\ \frac{rb_n+1}{d}+d\,.$$
Since both $rb_n+1$ and $d$ are odd, $x$ and $y$ have the same parity. 

We now apply Theorem \ref{m1} Item \eqref{i1}. We claim that $b_n+1$ does not divide $rb_n+1$. Indeed, when $n = 1$, we have $b_1 + 1 = a+1$, while $rb_1+1 = ar+1$. If $a+1$ divides $ar+1$, then $a+1$ divides $a(r-1)$. This contradicts our assumption that $a+1$ does not divide $r-1$. For all $n\ge 2$,
$$(r-1)(b_n+1)\ =\ rb_n - ar^{n-1} + r - 1\ <\ rb_n+1\ <\ r(b_n+1)\,.$$
The condition $n\ge 2$ is used in claiming the first inequality.
Hence, $\Gamma(rb_n+1, b_n+1)$ is determined by the parity of $x$. Similarly, $\Gamma(rb_n+1, r^2b_n+1)$ is determined by the parity of $y$. Since $x$ and $y$ have the same parity, $\Gamma(a_n, a_{n+1}) = \Gamma(a_{n+1}, a_{n+2})$.

\mbox{ }

Case 2: $a+1$ divides $r-1$. Then $d = a+1$. We have $b_n = ar^{n-1}\ge d$ for all $n\ge 2$. The same argument as in Case 1 shows that $\Gamma(a_n, a_{n+1}) = \Gamma(a_{n+1}, a_{n+2})$ for all $n\ge 2$. By Lemma \ref{l4}, $\gcd(a+1, ar+1) = \gcd(a+1, r-1) = a+1$. Hence, $$\Gamma(a_1, a_2)\ =\ \Gamma(a+1, ar+1)\ =\ \Gamma(1, (ar+1)/(a+1))\ =\ 1\,.$$
It remains to verify that $\Gamma(a_2, a_3) = \Gamma(ar+1, ar^2+1) = 2$. This follows directly from the observation that the equation
$$\frac{ar+1}{a+1}x + \frac{ar^2+1}{a+1}y + 1\ =\ \frac{1}{2}\left(\frac{ar+1}{a+1}-1\right)\left(\frac{ar^2+1}{a+1}-1\right)$$
has solution
\begin{equation}\label{e81}(x,y)\ =\ \left(\frac{ar}{2}-1, \frac{a}{2}\left(\frac{r-1}{a+1}-1\right)\right)\,.\end{equation}
\end{proof}

\begin{proof}[Proof of Theorem \ref{m6} when $d$ is even]
For $n\in \mathbb{N}_{\ge 2}$, let $b_n = ar^{n-1}$. Then we can write the three consecutive terms $a_n,a_{n+1},a_{n+2}$ as 
 $$b_n+1, rb_n+1,r^2b_n+1\,.$$
Our goal is to show that $\Gamma(a_n, a_{n+1})\neq \Gamma(a_{n+1}, a_{n+2})$.

\mbox{ }

Case 1: $(rb_n+1)/d$ is odd. Let $u = \Theta(b_n+1, rb_n+1)$ and $v = \Theta(r^2b_n+1, rb_n+1)$. For $n\ge 2$, as in the proof for odd $d$,
\begin{equation*}u+v\ =\ \frac{rb_n+1}{d}+d\,,\end{equation*}
 which implies that $x$ and $y$ have different parities. Hence, $\Gamma(a_n, a_{n+1})\neq\Gamma(a_{n+1}, a_{n+2})$.

\mbox{ }

Case 2: $(rb_n+1)/d$ is even. It holds that $b_n+1$ does not divide $rb_n+1$. 
Indeed, for all $n\ge 2$,
$$(r-1)(b_n+1)\ =\ rb_n - ar^{n-1} + r - 1\ <\ rb_n+1\ <\ r(b_n+1)\,.$$
(The condition $n\ge 2$ is used in claiming the first inequality.) Let $x = \Theta(rb_n+1, b_n+1)$ and $y = \Theta(rb_n+1, r^2b_n+1)$. Let $k_1, k_2 > 0$ such that
\begin{align}
\label{e82}\frac{(rb_n+1)x}{d} &\ =\ 1 + \frac{(b_n+1)k_1}{d}\,,\\
\label{e83}\frac{(rb_n+1)y}{d} &\ =\ 1 + \frac{(r^2b_n+1)k_2}{d}\,.
\end{align}
Since $0 < x < (b_n+1)/d$ and $0 < y < (r^2b_n+1)/d$, we know that 
\begin{equation}\label{e88}0 \ < \ k_1, k_2 \ <\ \frac{rb_n+1}{d}\,.\end{equation}

\begin{claim}\label{cl2}
It holds that $(rb_n+1)/d$ divides $k_1+k_2+d$.
\end{claim}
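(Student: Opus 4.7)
The plan is to work modulo $D := (rb_n+1)/d$ and extract the claim from two congruences derived from \eqref{e82} and \eqref{e83}. The key auxiliary quantity will be $R := (r-1)/d$, which is an integer because $d \mid r-1$ (as $d = \gcd(a+1, r-1)$).

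First I would rewrite \eqref{e82} and \eqref{e83} as
\begin{align*}
Dx &\ =\ 1 + \tfrac{b_n+1}{d}\,k_1,\\
Dy &\ =\ 1 + \tfrac{r^2 b_n+1}{d}\,k_2,
\end{align*}
and reduce both modulo $D$. The crucial observations are the two identities
\[ r(b_n+1) \ =\ (rb_n+1) + (r-1) \quad\text{and}\quad r^2 b_n+1 \ =\ r(rb_n+1) - (r-1),\]
which, after dividing by $d$, yield
\[ r\cdot \tfrac{b_n+1}{d} \ \equiv\ R \pmod{D} \quad\text{and}\quad \tfrac{r^2 b_n+1}{d} \ \equiv\ -R \pmod{D}.\]
Combining with the reduced equations (and using $\gcd(r,D)=1$, since $\gcd(r, rb_n+1)=1$) produces
\[ R k_1 \ \equiv\ -r \pmod{D} \quad\text{and}\quad R k_2 \ \equiv\ 1 \pmod{D}.\]
Adding these and using $Rd = r-1$ gives $R(k_1+k_2) \equiv -(r-1) = -Rd \pmod{D}$, hence
\[ R(k_1 + k_2 + d) \ \equiv\ 0 \pmod{D}.\]

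The final step, and the one requiring the most care, is to verify that $\gcd(R, D) = 1$, so that $R$ can be cancelled from the congruence. I would compute $\gcd(r-1, rb_n+1)$ directly: since $rb_n+1 - b_n(r-1) = b_n+1$, this equals $\gcd(r-1, b_n+1) = \gcd(r-1, ar^{n-1}+1)$, which by \eqref{e60} in the proof of Lemma~\ref{l4} equals $\gcd(a+1, r-1) = d$. Therefore $\gcd(R, D) = \gcd((r-1)/d, (rb_n+1)/d) = 1$, and cancelling $R$ in the congruence above yields $D \mid k_1+k_2+d$, which is exactly Claim~\ref{cl2}. The only potential obstacle is the coprimality check, but it follows cleanly from the chain of $\gcd$ reductions already established in Lemma~\ref{l4}.
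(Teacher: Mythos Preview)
Your argument is correct but follows a different route from the paper's. The paper proceeds by pure algebraic manipulation: it rewrites \eqref{e83} in the equivalent form
\[
\frac{(rb_n+1)(yb_n-rk_2b_n+k_2)}{d}\ =\ \frac{(b_n+1)(k_2+d)}{d} - 1,
\]
adds this to \eqref{e82}, and factors to obtain the exact identity
\[
\frac{rb_n+1}{d}\,(x+yb_n-rk_2b_n+k_2)\ =\ \frac{b_n+1}{d}\,(k_1+k_2+d),
\]
from which the claim follows since $\gcd((rb_n+1)/d,(b_n+1)/d)=1$. You instead work modulo $D=(rb_n+1)/d$ with the auxiliary quantity $R=(r-1)/d$, derive $R(k_1+k_2+d)\equiv 0\pmod{D}$, and then cancel $R$ after checking $\gcd(R,D)=1$ via Lemma~\ref{l4}. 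Both coprimality facts ultimately trace back to $\gcd(a_n,a_{n+1})=d$, so the two arguments are close cousins; yours is arguably tidier for the claim in isolation, but the paper's version has the advantage that its explicit identity is reused immediately afterward (substituting $k_1+k_2+d=(rb_n+1)/d$) to obtain \eqref{e90}, which drives the parity analysis. A minor note: the appeal to $\gcd(r,D)=1$ is not actually needed, since multiplying a congruence by $r$ is always valid; you only need coprimality for the cancellation of $R$ at the end.
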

\begin{proof}
It is easy to check that \eqref{e83} is equivalent to
\begin{equation}
\label{e84}\frac{(rb_n+1)(yb_n-rk_2b_n+k_2)}{d}\ =\  \frac{(b_n+1)(k_2+d)}{d} - 1\,.
\end{equation}
Add \eqref{e82} and \eqref{e84} side by side to obtain
\begin{equation}\label{e85}\frac{rb_n+1}{d}(x+yb_n-rk_2b_n+k_2)\ =\ \frac{b_n+1}{d}(k_1+k_2+d)\,.\end{equation}
Since $\gcd((rb+1)/d, (b_n+1)/d) = 1$, we have the desired conclusion.
\end{proof}

By the definition of $x$ and \eqref{e82}, we have
$$\frac{k_1(b_n+1)}{d} \ <\ \frac{(rb_n+1)((b_n+1)/d-1)}{d}\,,$$
which gives 
\begin{equation}\label{e86}k_1 \ <\ \frac{rb_n+1}{d}-\frac{rb_n+1}{b_n+1}\,.\end{equation}
Since $n\ge 2$,
\begin{equation}\label{e87}d\ \le\ r-1\ <\ \frac{ar^n+1}{ar^{n-1}+1}\ =\ \frac{rb_n+1}{b_n+1}\,.\end{equation}
By \eqref{e86} and \eqref{e87}, 
\begin{equation}\label{e89}k_1 \ <\ \frac{rb_n+1}{d}-d\,.\end{equation}
It then follows from \eqref{e88} and \eqref{e89} that
$$k_1 + k_2 + d \ <\ 2\frac{rb_n+1}{d}\,,$$
which, together with Claim \ref{cl2}, gives 
$$k_1 + k_2 + d\ =\ \frac{rb_n+1}{d}\,.$$
Hence, by \eqref{e85}, 
\begin{equation}\label{e90}x+yb_n-rk_2b_n+k_2\ =\ \frac{b_n+1}{d}\,.\end{equation}
We determine the parity of each integer in \eqref{e90} as follows.
\begin{itemize}
\item Since $(rb_n+1)/d$ is even and $\gcd((rb_n+1)/d, (b_n+1)/d) = 1$, $(b_n+1)/d$ must be odd. 
\item Since $d$ is even and $(b_n+1)/d$ is odd, $b_n$ must be odd. 
\item The integer $r$ is odd because $d = \gcd(a+1, r-1)$ is even.
\item Finally, because the left side of Equation \eqref{e83} is even, $k_2$ must odd.
\end{itemize}
Therefore, \eqref{e90} guarantees that $x$ and $y$ have different parities. Applying Theorem \ref{m1}, we conclude that 
$\Gamma(a_n, a_{n+1})\neq \Gamma(a_{n+1}, a_{n+2})$. This completes our proof. 
\end{proof}

\section{The Sequence $(\Gamma(k, n))_n$ for a Fixed $k$}\label{s4}
We prove Theorem \ref{m4}; the main ingredient is Theorem \ref{m1}. We split the proof into two cases corresponding to the parity of $k$, where the case $k$ is even is more technically involved. The two following lemmas are useful in proving the period. The proof of the following lemma is well-known and can be found, for example, at \url{https://math.stackexchange.com/questions/3876246}.

\begin{lemma}\label{l2} Suppose that $(a_n)_{n\ge 1}$ is periodic, and there exists $k\in \mathbb{N}$ with the property $a_n = a_{n+k}$ for all $n\ge 1$. If $T$ is the period of the sequence $(a_n)_{n\ge 1}$, then $T$ divides $k$. 
\end{lemma}

\begin{lemma}\label{l3} 
The sequence $\Delta(\mathbb{N})$ is $1, 2, 1, 2, 1, 2, \ldots$. 
\end{lemma}
\begin{proof}
The sequence $\Delta(\mathbb{N})$ starts with $1$ because $\Gamma(1, 2) = 1$. Apply Theorem \ref{m5} and we are done. 
\end{proof}

\begin{proof}[Proof of Theorem \ref{m4} when $k$ is odd] The case $k = 1$ is trivial, so we assume that $k \ge 3$.
Let $u, v\in \mathbb{N}$ such that $v = u+k$. We show that $\Gamma(k, u) = \Gamma(k, v)$. 

\mbox{ }

Case 1: $u$ divides $k$, giving $\Gamma(k, u) = 1$. Write $k = u\ell$ for some odd $\ell\in \mathbb{N}$. Then $v = u + k = u(\ell+1)$. We have
$\Gamma(k, v) = \Gamma(u\ell, u(\ell+1)) = \Gamma(\ell, \ell+1)$. Since 
$$\ell \frac{\ell-1}{2} + (\ell+1)\times 0\ =\ \frac{(\ell-1)\ell}{2}\,,$$
$\Gamma(\ell, \ell+1) = 1$; hence, $\Gamma(k, u) = \Gamma(k, v) = 1$. 

\mbox{ }

Case 2: $k$ divides $u$. Then $k$ divides $v$, and $\Gamma(k, u) = \Gamma(k, v) = 1$.

\mbox{ }

Case 3: $u$ does not divide $k$, and $k$ does not divide $u$. Let $d = \gcd(k, u)$. Since $k$ is odd, $k/d$ is odd. By Theorem \ref{m1}, $\Gamma(k, u)$ is determined by the parity of $x := \Theta(u, k)$. Similarly, $\Gamma(k, v)$ is determined by the parity of $y:= \Theta(v, k)$. Hence, it suffices to show that $\Theta(u, k)$ and $\Theta(v, k)$ have the same parity. By definition,
\begin{align}
&\label{e30}\frac{ux}{d}\ \equiv\ 1\mod \frac{k}{d}\,,\\
&\label{e31}\frac{vy}{d}\ \equiv\ 1\mod \frac{k}{d}\,.
\end{align}
Furthermore, that $v\equiv u\mod k$, and \eqref{e31} implies that 
\begin{equation}\label{e32}\frac{uy}{d}\ \equiv\ 1\mod \frac{k}{d}\,.\end{equation}
It follows from \eqref{e30} and \eqref{e32} that $x = y$. This completes our proof that $\Gamma(k, u) = \Gamma(k, v)$ whenever $u\equiv v\mod k$. However, this is not enough to conclude that the period of $(\Gamma(k, n))_{n\ge 1}$ is $k$; in the following, we show that this is indeed the case. 

Consider the first $k$ terms of $(\Gamma(k, n))_{n\ge 1}$. Pick $1\le s \le (k-1)/2$ and let $r = \gcd(k,s) = \gcd(k-s, k)$. 

\mbox{ }

Case I: if $s$ divides $k$, then $\Gamma(k, s) = 1$. Write $k = s\ell$ for some odd $\ell\in \mathbb{N}_{\ge 3}$. By Lemma \ref{l3}, we have
$\Gamma(k, k-s) = \Gamma(\ell, \ell-1) = 2$. Hence, $\Gamma(k,s)\neq \Gamma(k, k-s)$.

\mbox{ }

Case II: suppose that $s$ does not divide $k$. Observe that $k/2 < k-s < k$, so $k-s$ does not divide $k$. By Theorem \ref{m1}, $\Gamma(k, s)$ and $\Gamma(k, k-s)$ are determined by the parity of $\Theta(s, k)$ and $\Theta(k-s, k)$, respectively. By definition,
\begin{align}
\label{e33}\Theta(s, k)\frac{s}{r}&\ \equiv\ 1\mod \frac{k}{r}\,,\\ 
\label{e34}\Theta(k-s, k)\frac{k-s}{r}&\ \equiv\ 1\mod \frac{k}{r} \ \Longrightarrow\  \left(\frac{k}{r}-\Theta(k-s, k)\right)\frac{s}{r}\ \equiv\ 1\mod \frac{k}{r}\,.
\end{align}
It follows from \eqref{e33} and \eqref{e34} that 
$$\Theta(s, k) + \Theta(k-s, k) \ =\ \frac{k}{r}\,.$$
Since $k/r$ is odd, $\Theta(s,k)\not\equiv \Theta(k-s, k)\mod 2$, thus $\Gamma(k, s)\neq \Gamma(k, k-s)$.

We have shown that $\Gamma(k,s) \neq \Gamma(k, k-s)$ for all $1\le s\le (k-1)/2$. Along with the fact that $\Gamma(k, k) = 1$, we know that within the first $k$ terms of $(\Gamma(k, n))_{n\ge 1}$, the number of $1$'s is one more than the number of $2$'s.

We are ready to conclude the proof that $(\Gamma(k, n))_{n\ge 1}$ has period $k$. Let $T$ be the period of $(\Gamma(k, n))_{n\ge 1}$. By Lemma \ref{l2}, $T$ divides $k$. Hence, within the first $k$ terms, there are $k/T$ copies of the period. Let $p$ and $q$ be the number of $1$'s and $2$'s within each period, respectively. Then $(p-q)(k/T) = 1$, which implies that $p - q = k/T = 1$. Hence, $k$ is the period of $(\Gamma(k, n))_{n\ge 1}$.
\end{proof}

\begin{proof}[Proof of Theorem \ref{m4} when $k$ is even]
Let $u, v\in \mathbb{N}$ such that $v = u+2k$. We show that $\Gamma(k, u) = \Gamma(k, v)$. 

\mbox{ }

Case 1: $u$ divides $k$, giving $\Gamma(k, u) = 1$. Write $k = u\ell$ for some $\ell\in \mathbb{N}$. Then $v = u + 2k = u(2\ell+1)$. We have
$\Gamma(k, v) = \Gamma(u\ell, u(2\ell+1)) = \Gamma(\ell, 2\ell+1)$. Since 
$$\ell (\ell-1) + (2\ell+1)\times 0\ =\ \frac{(\ell-1)((2\ell+1)-1)}{2}\,,$$
$\Gamma(\ell, 2\ell+1) = 1$; hence, $\Gamma(k, u) = \Gamma(k, v) = 1$. 

\mbox{ }

Case 2: $k$ divides $u$. Then $k$ divides $v$, and $\Gamma(k, u) = \Gamma(k, v) = 1$.

\mbox{ }

Case 3: $u$ does not divide $k$, and $k$ does not divide $u$. Let $d = \gcd(k, u) = \gcd(k, v)$. 
If $k/d$ is odd, the exact same argument as in the proof of Theorem \ref{m4} when $k$ is odd applies. Suppose that $k/d$ is even.
By Theorem \ref{m1}, $\Gamma(k, u)$ is determined by the parity of $x := \Theta(k, u)$. Similarly, $\Gamma(k, v)$ is determined by the parity of $y:= \Theta(k, v)$. Hence, it suffices to show that $x$ and $y$ have the same parity. By definition,
$$\frac{kx}{d}\ \equiv\  1 \mod \frac{u}{d}\ \Rightarrow\ \frac{kx}{d}-1\ =\ \frac{u}{d}\ell_1\,,\ \ \ \ell_1>0\,,\ \ell_1\ \text{is odd,}$$
$$\frac{ky}{d}\ \equiv\  1 \mod \frac{v}{d}\ \Rightarrow\ \frac{ky}{d}-1\ =\ \frac{u}{d}\ell_2+2\frac{k}{d}\ell_2\,, \ \ \ \ell_2>0\,,\ \ell_2\ \text{is odd.}$$
Note that $\ell_1$, $\ell_2 < k/d$ as $x < u/d$ and $y < v/d\ =\ (u+2k)/d$. Subtracting the two equations above gives
\begin{equation}\label{Eq: Gamma(k,N)_k even_Case 3}
    \frac{k}{d}(x-y)\ =\ \frac{u}{d}(\ell_1-\ell_2)-2\frac{k}{d}\ell_2\,,
\end{equation}

\noindent which implies that $\ell_1-\ell_2$ is divisible by $k/d$. However, $-k/d<\ell_1-\ell_2<k/d$; therefore, $\ell_1-\ell_2=0$. Replacing $\ell_1-\ell_2$ by $0$ in \eqref{Eq: Gamma(k,N)_k even_Case 3}, we obtain
$$x-y\ =\ -2\ell_2\,.$$
As a result, $x \equiv y \mod 2$.

Next, we prove that within the first $2k$ terms of $(\Gamma(k, n))_{n\ge 1}$, the number of $1$'s is two more than the number of $2$'s. Pick $1\le s\le k-1$ and let $r = \gcd(k,s) = \gcd(k, 2k-s)$. We show that $\Gamma(k,s)\neq \Gamma(k, 2k-s)$. Then we are done since $\Gamma(k, k) = \Gamma(k, 2k) = 1$. 

\mbox{ }

Case I: if $s$ divides $k$, then $\Gamma(k, s) = 1$. Write $k = s\ell$ for some $\ell\in \mathbb{N}_{\ge 2}$. By Corollary  \ref{c1}, we have
$\Gamma(k, 2k-s) = \Gamma(\ell, 2\ell-1) = 2$. Hence, $\Gamma(k,s)\neq \Gamma(k, 2k-s)$.

\mbox{ }

Case II: suppose that $s$ does not divide $k$. Observe that $k+1 \le 2k-s \le 2k-1$, so $k$ does not divide $2k-s$. If $k/r$ is odd, the exact same argument as in the proof of Theorem \ref{m4} when $k$ is odd applies. Suppose that $k/r$ is even. 
By Theorem \ref{m1}, $\Gamma(k, s)$ and $\Gamma(k, 2k-s)$ are determined by the parity of $p:=\Theta(k, s)$ and $q:=\Theta(k, 2k-s)$, respectively. By definition,
$$\frac{pk}{r}\ \equiv\ 1\mod \frac{s}{r}\mbox{ and }\frac{qk}{r}\ \equiv\ 1\mod \frac{2k-s}{r}\,.$$
Write 
\begin{align*}
\frac{pk}{r} - 1 &\ =\ \frac{s}{r}\ell_1\,, \quad \ell_1> 0\,, \ell_1\mbox{ odd,}\\
\frac{qk}{r} - 1 &\ =\ \frac{2k-s}{r}\ell_2\,, \quad \ell_2 > 0\,, \ell_2\mbox{ odd}. 
\end{align*}
Note that $\ell_1, \ell_2 < k/r$ because $p < s/r$ and $q < (2k-s)/r$.
Hence, 
\begin{equation}\label{e50}\frac{k}{r}(p-q)\ =\ \frac{s}{r}(\ell_1+\ell_2) - \frac{2k}{r}\ell_2\,,\end{equation}
which implies that $\ell_1+\ell_2$ is divisible by $k/r$. However, $0 < \ell_1 + \ell_2 < 2k/r$; therefore, $\ell_1 + \ell_2 = k/r$. Replacing $\ell_1 + \ell_2$ by $k/r$ in \eqref{e50}, we obtain
$$p-q \ =\ \frac{s}{r} - 2\ell_2\,.$$
As a result, $p\not\equiv q\mod 2$, so $\Gamma(k, s)\neq \Gamma(k, 2k-s)$. 

Finally, we prove that $(\Gamma(k, n))_{n\ge 1}$ has period $2k$. Let $T$ be the period of $(\Gamma(k, n))_{n\ge 1}$. By Lemma \ref{l2}, $T$ divides $2k$. Hence, within the first $2k$ terms, there are $2k/T$ copies of the period. Let $p$ and $q$ be the number of $1$'s and $2$'s within each period, respectively. Then $(p-q)(2k/T) = 2$; equivalently, $(p-q)k/T = 1$. Since $p-q\le 2$ by above, there are two cases: either $(p-q, T) = (1, k)$ or $(p-q, T) = (2, 2k)$. We show that the former cannot happen. Suppose, for a contradiction, that $T = k = 2j$ for some $j\in \mathbb{N}$. It follows that $\Gamma(2j, 2j+1) = \Gamma(k, k+1) = \Gamma(k, 1) = 1$, contradicting Lemma \ref{l3}. This completes our proof. 
\end{proof}

\section{Fibonacci-Type Recurrence}\label{s5}
In this section, we find $\Delta((a_n)_n)$, where $(a_n)_n$ satisfies a certain linear recurrence of order two. This extends \cite[Theorem 1.4]{C}.
\begin{lemma}\label{lemsame}
     Fix $a,b,k\in \mathbb{N} $ with $\gcd(a,b)\ = \ d$. Consider the sequence $(a_n)_n$ where $a_1=a,a_2=b$, and $a_n = ka_{n-1}+a_{n-2}$ for $n\ge 3$. Then $\gcd(a_n,a_{n+1})=d$ for all $n\in \mathbb{N}$.
\end{lemma}

\begin{proof}
    We prove by induction. For $n=1$, $\gcd(a_1,a_2)=\gcd(a,b)=d$. Suppose $\gcd(a_n,a_{n+1})=d$ for some $n\ge 1$.
    We have 
    $$\gcd(a_{n+2}, a_{n+1})\ =\ \gcd(ka_{n+1}+a_n, a_{n+1})\ =\ \gcd(a_{n}, a_{n+1})\ =\ d\,.$$
    This completes our induction step. 
\end{proof}

Due to Lemma \ref{lemsame}, if $(a_n)_n$ satisfies $a_n = ka_{n-1}+a_{n-2}$, then in finding $\Delta((a_n)_n)$, we can assume that $\gcd(a_1, a_2) = 1$ with any loss of generality. 

\begin{theorem}\label{gcd1}
Fix $a,b,k \in \mathbb{N}$ with $\gcd(a,b) = 1$. Let $a_1=a,a_2=b$, and $a_n = ka_{n-1}+a_{n-2}$ for $n\ge3$. Let $(x_n)_n$ be the sequence $1,1,1,2,2,2,1,1,1,\ldots$ and $(y_n)_n$ be the sequence $2,2,2,1,1,1,2,2,2,\ldots$. 
\begin{enumerate}
\item If $k$ is even, $\Delta((a_n)_n)$ is constant.
\item If $k$ is odd, 
\begin{enumerate}
    \item and $a,b$ are both odd, $\Delta((a_n)_{n\ge3}) =  (x_n)_n$ or $(y_n)_n$, or if 
    \item $a$ is odd, and $b$ is even, $\Delta((a_n)_{n\ge2}) = (x_n)_n$ or $(y_n)_n$, or if
    \item $a$ is even, and $b$ is odd, $\Delta((a_n)_n) = (x_n)_n$ or $(y_n)_n$.
\end{enumerate}
\end{enumerate}
\end{theorem}
\begin{proof}
Fix $n\in \mathbb{N}$ and consider three consecutive terms $a_{n},a_{n+1},a_{n+2}$, which are
$a_n, a_{n+1}, ka_{n+1}+a_n$. We proceed by case analysis. 

\mbox{ }

Case 1: $a_{n+1}$ is odd and $a_{n+1}\neq 1$. We show in this case that $\Gamma(a_n, a_{n+1}) = \Gamma(a_{n+1}, a_{n+2})$. 

\begin{enumerate}

\item Case 1.1: $a_n$ divides $a_{n+1}$. Write $a_{n+1} = a_n\ell$ for some odd $\ell\in \mathbb{N}$. Then $a_{n+2} = a_n(k\ell+1)$, which gives $\Gamma(a_{n+1}, a_{n+2}) = \Gamma(\ell, k\ell+1) = 1$ because
$$\ell \times \frac{(\ell-1)k}{2} + (k\ell+1)\times 0\ =\ \frac{(\ell-1)k\ell}{2}\,.$$
Therefore, $\Gamma(a_n, a_{n+1}) = \Gamma(a_{n+1}, a_{n+2})$.

\item Case 1.2: $a_n$ does not divide $a_{n+1}$. 
We claim that $a_{n+1}$ divides neither $a_{n}$ nor $a_{n+2}$. Indeed, if $a_{n+1}|a_{n}$, then $\gcd(a_n,a_{n+1}) = a_{n+1} > 1$, contradicting $\gcd(a_{n},a_{n+1})=\gcd(a,b) = 1$ by Lemma \ref{lemsame}. Similarly, $a_{n+1}$ does not divide $a_{n+2}$.  

Let $c = \Theta(a_n,a_{n+1})$ and $d = \Theta(a_{n+2},a_{n+1})$ to have
\begin{align}\label{1} ca_n &\ \equiv \ 1\mod a_{n+1}\,,\\
\label{2}d(ka_{n+1}+a_n)&\ \equiv\ 1 \mod a_{n+1}\ \Longrightarrow\ a_nd\ \equiv\ 1\mod a_{n+1}\,.\end{align}
Therefore, $c = d$, which, by Theorem \ref{m1}, gives $\Gamma(a_n,a_{n+1}) = \Gamma(a_{n+1},a_{n+2})$.
\end{enumerate}

\mbox{ }

Case 2: $a_{n+1}$ is odd and $a_{n+1} = 1$. Clearly, $\Gamma(a_n, a_{n+1}) = \Gamma(a_{n+1}, a_{n+2}) = 1$. 

From Case 1 and Case 2, we conclude
\begin{equation}\label{e101}\Gamma(a_{n},a_{n+1})\ =\ \Gamma(a_{n+1},a_{n+2})\,, \quad \mbox{ if }a_{n+1}\mbox{ is odd.}\end{equation}

\mbox{ }

Case 3: $a_{n+1}$ is even and $a_{n}\neq 1$. 

\begin{enumerate}

\item Case 3.1: $a_{n+1}$ divides $a_n$. Then $\gcd(a_n, a_{n+1}) = 1$ implies that $a_{n+1} = 1$, contradicting that $a_{n+1}$ is even.

\item Case 3.2: $a_{n+1}$ does not divide $a_n$. As in above cases, $a_n$ does not divide $a_{n+1}$, and $a_{n+1}$ does not divide $a_{n+2}$ because otherwise, we violate the condition $\gcd(a_n, a_{n+1}) = \gcd(a_{n+1}, a_{n+2}) = 1$. Let $p=\Theta(a_{n+1},a_n)$ and $q = \Theta(a_{n+1},a_{n+2})$ to have
\begin{align*}
    pa_{n+1} &\ \equiv\ 1 \mod a_n\,,\\
qa_{n+1}&\ \equiv\ 1 \mod ka_{n+1}+a_n\,.
\end{align*}
There exist positive integers $k_1, k_2$ such that
\begin{align}\label{5}pa_{n+1} &\ = \ 1+k_1a_n\,, \\
\label{6}qa_{n+1} &\ = \ 1+k_2(ka_{n+1}+a_n)\,. \end{align}
By definition,  $0< p< a_{n}$ and $0 < q < ka_{n+1}+a_n$. Hence, \eqref{5} and \eqref{6} imply that 
$0 < k_1, k_2 <a_{n+1}$.
Subtracting \eqref{6} from \eqref{5} side by side, we obtain 
\begin{equation}\label{7}a_{n+1}(p-q+kk_2) \ = \ a_n(k_1-k_2)\,. \end{equation}
Since $\gcd(a_n,a_{n+1})=1$, it follows from \eqref{7} that $a_{n+1}$ divides $k_1-k_2$. However, that $0 < k_1, k_2 < a_{n+1}$ implies that $-a_{n+1} < k_1-k_2 < a_{n+1}$. As a result, $k_1$ must be equal to $k_2$. Replacing $k_1$ by $k_2$ in \eqref{7} gives
\begin{equation}\label{e100}p-q + kk_2 \ =\ 0\,.\end{equation}
From \eqref{6} and the fact that $a_{n+1}$ is even, we know that $k_2$ is odd. Therefore, from \eqref{e100}, we conclude that
\begin{itemize}
\item if $k$ is even, $p$ and $q$ have the same parity, and 
\item if $k$ is odd, $p$ and $q$ have different parities. 
\end{itemize}
\end{enumerate}

Therefore, when $a_{n+1}$ is even and $a_n \neq 1$,
\begin{itemize}
\item if $k$ is even, then  $\Gamma(a_n,a_{n+1})=\Gamma(a_{n+1},a_{n+2})$, while
\item if $k$ is odd, then $\Gamma(a_n,a_{n+1})\neq \Gamma(a_{n+1},a_{n+2})$.
\end{itemize}

\mbox{ }

Case 4: $a_{n+1}$ is even and $a_n = 1$. Write $a_{n+1}=2r$ for some $r\in \mathbb{N}$ to have $a_{n+2} = 2kr 
 + 1$. Clearly, $\Gamma(a_n,a_{n+1})=1$. We find $\Gamma(a_{n+1}, a_{n+2})$. Note that $\gcd(2r,2kr+1)=1$ and $\Theta(2r,2kr+1) = 2kr+1-k$, which is odd if and only if $k$ is even. Therefore, if $k$ is even, $\Gamma(a_n,a_{n+1})=\Gamma(a_{n+1},a_{n+2})=1$, and if $k$ is odd, then $\Gamma(a_n,a_{n+1})=1\ne 2 = \Gamma(a_{n+1},a_{n+2})$.\\

From Case 3 and Case 4, we conclude that 
\begin{align}
\label{e102}\Gamma(a_n,a_{n+1})&\  =\  \Gamma(a_{n+1},a_{n+2})\,, \quad \mbox{ if } a_{n+1} \mbox{ is even and } k \mbox{ is even.}\\
\label{e103}\Gamma(a_n,a_{n+1}) &\ \neq\  \Gamma(a_{n+1},a_{n+2})\,, \quad \mbox{ if } a_{n+1} \mbox{ is even and } k \mbox{ is odd.}
\end{align}

From \eqref{e101}, \eqref{e102}, and \eqref{e103}, we have
 \begin{itemize}
\item If $k$ is even, then $\Gamma(a_n,a_{n+1})=\Gamma(a_{n+1},a_{n+2})$ for all $n\in \mathbb{N}$.
\item If $k$ is odd, then $\Gamma(a_n,a_{n+1})=\Gamma(a_{n+1},a_{n+2})$ if and only if  $a_{n+1}$ is odd.
 \end{itemize}
Hence, we have proved Item (1) of Theorem \ref{gcd1}. For Item (2), assume that $k$ is odd. 

If $a$ and $b$ are both odd, the parity of terms in $(a_n)_n$ is 
$$\mbox{odd } , \mbox{odd } , \mbox{even } , \mbox{odd } , \mbox{odd } , \mbox{even } , \mbox{odd } , \mbox{odd } , \mbox{even } , \ldots\,.$$
It is easy to verify that \eqref{e103} gives us Item (2) Part (a). 

If $a$ is odd and $b$ is even, the parity of terms in $(a_n)_n$ is 
$$\mbox{odd } , \mbox{even } , \mbox{odd } , \mbox{odd } , \mbox{even } , \mbox{odd } , \mbox{odd } , \mbox{even } , \mbox{odd } , \ldots\,.$$
In this case, \eqref{e103} gives us Item (2) Part (b).

Similarly, we get Item (3) Part (c) when $a$ is even and $b$ is odd. 
\end{proof}

As discussed before, we can drop the condition $\gcd(a, b) = 1$ in Theorem \ref{gcd1}.

\begin{corollary}
Fix $a,b,k \in \mathbb{N}$. Let $a_1=a,a_2=b$, and $a_n = ka_{n-1}+a_{n-2}$ for $n\ge3$. Let $(x_n)_n$ be the sequence $1,1,1,2,2,2,1,1,1,\ldots$ and $(y_n)_n$ be the sequence $2,2,2,1,1,1,2,2,2,\ldots$. 
\begin{enumerate}
\item If $k$ is even, $\Delta((a_n)_n)$ is constant.
\item If $k$ is odd, 
\begin{enumerate}
    \item and $a,b$ are both odd, $\Delta((a_n)_{n\ge3}) =  (x_n)_n$ or $(y_n)_n$. 
    \item $a$ is odd, and $b$ is even, $\Delta((a_n)_{n\ge2}) = (x_n)_n$ or $(y_n)_n$.
    \item $a$ is even, and $b$ is odd, $\Delta((a_n)_n) = (x_n)_n$ or $(y_n)_n$.
\end{enumerate}
\end{enumerate}
\end{corollary}

\begin{proof}
The corollary follows immediately from how we define $\Delta((a_n)_n)$. 
\end{proof}
\section{Problems for Future Investigation}\label{op}

It would be interesting to see $\Delta((a_n)_n)$ when $(a_n)_{n=1}^\infty$ satisfies more general recurrences to extend results in Sect. \ref{s5}. For other problems in this topic, interested readers may refer to \cite{CMT}.

\section*{Acknowledgement.} The authors thank Garrett Tresch for pointing out and fixing an error in the proof of Theorem \ref{m4} for even $k$.

%
%

\end{document}